\date{\today}
    \title{Bi-Lipschitz contact invariance of rank}
\author{Nhan Nguyen}
\address{Basque Center for Applied Mathematics (BCAM),
Alameda de Mazarredo 14, 48009 Bilbao, Bizkaia, Spain}
\email{nnguyen@bcamath.org}
\address{ThangLong Institute of Mathematics and Applied Sciences (TMAS), Nghiem Xuan Yem,  Hoang Mai, Hanoi, Vietnam}
\email{nguyenxuanvietnhan@gmail.com}
\newtheorem{thm}{Theorem}[section]
\newtheorem{lem}[thm]{Lemma}
\theoremstyle{theorem}
\newcommand{\bb}{\mathbb}
\newcommand{\al}{\mathcal}
\newcommand{\fs}{\mathscr}
\newcommand{\rank}{{\rm rank}}
\newcommand{\ord}{{\rm ord}}
\begin{document}
\maketitle
\begin{abstract}
In this note we show that the rank of a smooth map is a bi-Lipschitz contact invariant. As a consequence, the first Boardman symbol and its length are bi-Lipschitz contact invariants. 
We also give a counterexample showing that Boardman symbol of a smooth map is not a bi-Lipschitz right invariant. 
\end{abstract}
\section{Introduction}	
In this paper, we focus on the problem of investigating invariants with respect to bi-Lipschitz contact equivalence of mappings. There are not so many results related to this problem have been established, some of them can be found in \cite{Lev2}, \cite{BF}. The bi-Lipschitz contact equivalence implies the bi-Lipschitz equivalence between the zero sets of the mappings, it is, however, not stronger than the topological left-right equivalence. An evidence of this is that in the space of polynomial mappings of fixed  degrees, bi-Lipschitz contact equivalence  does not admit moduli while moduli appears  in case of topological left-right equivalence.  Namely, given a smooth family of polynomial maps,  Ruas and Valette \cite{RV} prove that there is finite decomposition of  the parameter space  such that on each element of the decomposition, the family is bi-Lipschitz contact trivial. Such a decomposition does not always exist for the topological equivalence.  Thom \cite{Thom} gives a family of polynomial maps  $f_t$ in which $f_{t_1}$ and $f_{t_2}$ are not topological left-right equivalent  if $t_1 \neq t_2$.

There are properties which are known in case of bi-Lipschitz contact equivalence but still remain open in the topological case. The multiplicity of complex analytic function germs is an example.  In the same manner, we show in the paper that the rank of a smooth map germ is a bi-Lipschitz contact invariant (Theorem \ref{thm_k}) for both the real and the complex case.  By the rank of a map we mean the rank of its Jacobian matrix. We do not know if rank is a topological left-right (resp. right) invariant in case of  the complex analytic mappings.  For complex analytic functions, it easy to see that there are only two possibilities of rank which is equal either to $1$ or  to $0$. If the rank is $1$, then the germ has no singularity hence the Milnor number is  $0$, otherwise, the Milnor number is bigger than $0$. Since the Milnor number is  a topological invariant, so is the rank.  Rank of smooth functions in the real case is not a topological right invariant.  For instance, consider $f(x) = x$ and $g(x) = x^3$. It is obvious that $f$ and $g$ are topological right equivalent, however, $\rank (f) = 1$ while $\rank (g) = 0$.

Our result is deduced from an interesting property that if two smooth map germs are bi-Lipschitz left-right equivalent then their first homogeneous parts are also bi-Lipschitz left-right equivalent (Theorem \ref{thm_h_a}).  The main tool used in the paper is  the construction of maps associated to a given bi-Lipschitz homeomorphism due to Sampaio \cite{Sampaio}. These maps somehow play a role as tangent maps. 

Together with the order, the rank could be regarded  as one of the most basic invariants for the bi-Lipschitz contact equivalence.  A consequence of this result is that the first Boardman symbol and its length are bi-Lipschitz contact invariants. We would like to remark that Boardman symbol is a very important invariant in the study smooth mappings. It is well known that Boardman symbol is a smooth contact invariant. It would be interesting to know whether Boardman symbol is a bi-Lipschitz invariant.
In Section \ref{section4},  we give a counterexample which shows that in general it is not a bi-Lipschitz right invariant . There is also an example showing that Boardman symbol is not a good enough  to determine the topological type of function germs even for  isolated plane curve singularities, i.e., in a plane, two function germs with isolated singularities of the same Boardman symbol may not have the same topological type.

Throughout the paper, by a smooth map germ $f : \bb K^n, 0 \to \bb K^p, 0$  we mean $f$ is  a $C^\infty$-map in the case $\bb K = \bb R$ and $f$ is  an $analytic$ map in the case $\bb K  = \bb C$.

\section{Preliminaries}
\subsection{Bi-Lipschitz contact equivalence}

Let $\bb K = \bb C$ or $\bb R$.  Let $f, g : \bb K^n, 0 \to \bb K^p, 0$ be smooth map germs. The maps $f$ and $g$ are called bi-Lipschitz left-right equivalent (or bi-Lipschitz $\al A$-equivalent) if there are germs of  bi-Lipschitz homeomorphisms $\varphi: \bb K^n, 0 \to \bb K^n, 0$ and $\psi: \bb K^p, 0 \to \bb K^p, 0$ such that $f\circ \varphi = \psi \circ g$. If $\psi$ coincides with  the identity map, then $f$ and $g$ are called bi-Lipschitz right equivalent (or bi-Lipschitz $\al R$-equivalent).

The maps $f$ and $g$  are \textit{bi-Lipschitz contact equivalent} (or bi-Lipschitz $\fs K$-equivalent) if there are germs of bi-Lipschitz homeomorphisms  $h: \bb K^n, 0 \to  \bb R^n, 0$ and $H : \bb K^n \times \bb K^p, 0 \to  \bb K^n \times \bb K^p, 0$  of the form $H(x, y) = (h(x), \theta(x,y))$  such that  $H(x, f(x)) = (h(x), g(h(x)))$ and  $\theta(x, 0) = 0$.  If $h$ is the identity map then $f$ and $g$ is called bi-Lipschitz $\fs C$-equivalent.

Given a smooth function germ $\xi: \bb K^n, 0 \to \bb K, 0$ .  We may write the Taylor expansion of $\xi$ at $0$ as  $T_0 \xi= \sum_k \xi_k $ where $\xi_k$ is a homogeneous polynomial of degree $k$ . The smallest  integer $k$  such that $\xi_k \neq 0$ is called {\it the order}  (or the multiplicity) of $h$, denoted by $\ord (\xi)$.

For a smooth map germ $f$ given as above, we  may write $f= (f_1, \ldots, f_p)$. The {\it rank of $f$}, denoted $\rank (f)$, is the rank of the Jacobian matrix $(\partial f_i/ \partial x_j)$ at $0$. The order of $f$ is  $\ord (f) = \min \{\ord (f_i)\}_{i = 1, \ldots, p}$ . {\it The first homogeneous part} of $f$ is the polynomial map  $H_f = (f_{1,  \ord (f)}, \ldots, f_{p,  \ord (f)})$ where $f_{i, \ord (f)}$ is the homogeneous polynomial of degree $\ord (f)$ in the Taylor expansion of $f_i$ at $0$.  For example if $f(x, y) = (x^2 + y^3, x^2y)$, then $\rank(f) = 0$,  $\ord (f) = 2$ and $H_f(x, y) = (x^2, 0)$.

We need the following lemmas for the next section. Results in these lemmas are well known and easy to prove.  One might find  an analogous statement of Lemma \ref{lem_1} for function germs in \cite{Lev} and  a proof of Lemma \ref{lem2.2}  in \cite{AF}, Theorem 3.2. For convenience, we provide here with proofs.
\begin{lem} \label{lem_1} If $f$ and $g$ are bi-Lipschitz $\al K$-equivalent, then there is a germ of bi-Lipschitz homeomorphism 
$\varphi : \bb K^n, 0 \to \bb K^n, 0$ and a constant $c > 1$ such that 
$$ \frac{1}{c} \|f\| \leq \|g\circ \varphi \| \leq c \|f\|$$
\end{lem}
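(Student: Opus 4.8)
The plan is to take $\varphi = h$, the source homeomorphism coming from the bi-Lipschitz $\fs K$-equivalence, and to extract both inequalities directly from the bi-Lipschitz property of the ambient homeomorphism $H$. First I would record the consequence of the defining identity: since $H(x,y) = (h(x), \theta(x,y))$ and $H(x,f(x)) = (h(x), g(h(x)))$, comparing the $\bb K^p$-components gives $\theta(x, f(x)) = g(h(x)) = (g\circ\varphi)(x)$. So it is enough to compare $\|\theta(x,f(x))\|$ with $\|f(x)\|$ on a neighbourhood of $0$.

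Next, let $L \geq 1$ be a bi-Lipschitz constant of the germ $H$, say with respect to the Euclidean norm on $\bb K^n\times\bb K^p$ (a different choice of norm only rescales the constants). Applying the bi-Lipschitz estimate for $H$ to the pair of points $(x, f(x))$ and $(x, 0)$, and using $\theta(x,0)=0$, the difference $H(x,f(x)) - H(x,0)$ equals $(0,\, \theta(x,f(x)))$, while $(x,f(x)) - (x,0) = (0, f(x))$. Hence
\[
\tfrac{1}{L}\,\|f(x)\| \;\leq\; \|\theta(x,f(x))\| \;=\; \|(g\circ\varphi)(x)\| \;\leq\; L\,\|f(x)\|,
\]
and taking $c = \max\{L, 2\} > 1$ yields the stated inequalities (with $\varphi=h$, which is a germ of bi-Lipschitz homeomorphism by hypothesis).

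The argument has essentially no obstacle; the one point that actually matters is that the triangular form $H(x,y) = (h(x), \theta(x,y))$ together with the normalization $\theta(x,0) = 0$ is exactly what makes $H(x,f(x)) - H(x,0)$ collapse to $(0, \theta(x,f(x)))$, converting the bi-Lipschitz control of $H$ into a two-sided comparison between $g\circ\varphi$ and $f$. If one works with a norm on $\bb K^n\times\bb K^p$ for which $\|(0,v)\|$ is not literally $\|v\|$, the norm-equivalence constants are harmless and get absorbed into $c$.
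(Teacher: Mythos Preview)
Your proof is correct and follows essentially the same idea as the paper's: both compare the points $(x,f(x))$ and $(x,0)$ under the ambient bi-Lipschitz map, using $\theta(x,0)=0$ so that the first components cancel and only $\|f(x)\|$ and $\|(g\circ\varphi)(x)\|$ remain. The only cosmetic difference is that the paper first passes to the $\fs C$-equivalence between $f$ and $g\circ\varphi$ (so the source map becomes the identity) and then applies the Lipschitz bound of $H$ and of $H^{-1}$ separately, whereas you work directly with the original $H=(h,\theta)$ and use both sides of its bi-Lipschitz inequality at once; the underlying computation is the same.
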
 
\begin{proof}
By the hypothesis, there is a bi-Lipschitz homeomorphism $\varphi : \bb K^n, 0 \to \bb K^n, 0$ such that $f$ and $\bar{g} = g \circ \varphi$ are bi-Lipschitz $\fs C$-equivalent. It means there is a germ of a bi-Lipschitz homeomorphism $H: \bb K^n \times \bb K^p, 0 \to \bb K^n \times \bb K^p, 0$ such that $H(x, f(x)) = (x, \bar{g}(x))$. Since $f(0) = \bar{g}(0)= 0$,   for any $x$ in a small neighborhood of $0$, 
\begin{align*}
 \|f(x)\| = \|(x, f(x)) - (x, f(0))\| & = \| H^{-1} (x, \bar{g}(x)) -  H^{-1} (x, \bar{g}(0)) \| \\
 & \leq c  \|(x, \bar{g}(x)) - (x, \bar{g}(0))\|   = c\|g(x)\|
\end{align*}
where $c$ is the Lipschitz constant of $H$.
Similarly, we also have $\| \bar{g}(x) \| \leq c \|f(x)\|$.  The lemma is proved.
\end{proof}

\begin{lem}\label{lem2.2} If $f$ and $g$ are bi-Lipschitz $\al K$-equivalent, then $\ord(f) = \ord(g)$.
\end{lem}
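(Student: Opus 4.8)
The plan is to reduce everything to the metric comparison supplied by Lemma~\ref{lem_1} and then to read off the order by probing $\|f\|$ and $\|g\|$ along well-chosen rays. First I would apply Lemma~\ref{lem_1} to obtain a germ of bi-Lipschitz homeomorphism $\varphi:\bb K^n,0\to\bb K^n,0$ and a constant $c>1$ with $\tfrac1c\|f(x)\|\le\|g(\varphi(x))\|\le c\|f(x)\|$ for $x$ near $0$, and I would fix $L>1$ with $\tfrac1L\|x\|\le\|\varphi(x)\|\le L\|x\|$ and $\tfrac1L\|x\|\le\|\varphi^{-1}(x)\|\le L\|x\|$, which is possible since $\varphi$ is bi-Lipschitz.

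Next I would record two elementary facts about a smooth map germ $h:\bb K^n,0\to\bb K^p,0$ with $\ord(h)=m$. Since each component $h_i$ has order at least $m$, Taylor expansion gives a constant $C>0$ with $\|h(x)\|\le C\|x\|^m$ for $x$ near $0$; this upper bound holds on a whole neighbourhood. A matching lower bound, by contrast, must be directional: pick a component $h_i$ with $\ord(h_i)=m$; its degree-$m$ homogeneous part $h_{i,m}$ is a nonzero homogeneous polynomial, so there is a unit vector $v$ with $h_{i,m}(v)\ne0$, and then $|h_i(tv)|=|h_{i,m}(v)|\,|t|^m+o(|t|^m)$ as $t\to0$, whence there exist $c_0>0$ and $t_0>0$ with $\|h(tv)\|\ge c_0|t|^m$ for $0<|t|<t_0$. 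One should resist the temptation to claim a two-sided comparison $\|h(x)\|\sim\|x\|^m$, which fails in general (e.g.\ $h(x,y)=(x^2,y^3)$).

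With $a:=\ord(f)$ and $b:=\ord(g)$, I would then argue $a\ge b$ by taking a unit vector $v$ with $\|f(tv)\|\ge c_0|t|^a$ for small $t$ and estimating, at the point $\varphi(tv)$,
\[
\frac{c_0}{c}\,|t|^{a}\ \le\ \frac1c\|f(tv)\|\ \le\ \|g(\varphi(tv))\|\ \le\ C\|\varphi(tv)\|^{\,b}\ \le\ CL^{b}\,|t|^{b},
\]
and letting $t\to0$: since $|t|^{a}\le (c/c_0)\,CL^{b}\,|t|^{b}$ for all small $t>0$, necessarily $a\ge b$. For the reverse inequality $b\ge a$ I would run the symmetric argument: choose a unit vector $w$ with $\|g(sw)\|\ge c_0|s|^b$, set $x=\varphi^{-1}(sw)$ so that $\|x\|\le L|s|$, and combine $\|f(x)\|\ge\tfrac1c\|g(\varphi(x))\|=\tfrac1c\|g(sw)\|\ge\tfrac{c_0}{c}|s|^b$ with $\|f(x)\|\le C\|x\|^a\le CL^a|s|^a$ to force $b\ge a$ as $s\to0$. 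Hence $\ord(f)=\ord(g)$.

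There is no genuine obstacle in this argument; the only subtlety, already flagged above, is to keep the two inequalities pointing in the correct direction by pairing the everywhere-valid upper bound $\|h(x)\|\le C\|x\|^m$ with the single directional lower bound $\|h(tv)\|\ge c_0|t|^m$, rather than assuming a norm equivalence that does not hold.
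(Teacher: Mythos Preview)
Your proof is correct and follows essentially the same strategy as the paper's: invoke Lemma~\ref{lem_1}, then compare $\|f\|$ and $\|g\circ\varphi\|$ along a ray where the first homogeneous part does not vanish, using the Lipschitz bound on $\varphi$ to control $\|\varphi(tv)\|$. Your explicit separation of the global upper bound $\|h(x)\|\le C\|x\|^m$ from the merely directional lower bound $\|h(tv)\|\ge c_0|t|^m$ is in fact a cleaner handling of the one delicate point than the paper's compressed chain of $\sim$'s.
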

\begin{proof}
The proof is similar to the proof of Lemma 4.2 in \cite{nrs}. By Lemma \ref{lem_1}, there is a germ of bi-Lipschitz homeomorphism $\varphi : \bb K^n, 0 \to \bb K^n, 0$  such that $\|f\| \sim \| g\circ \varphi \|$. Write $\varphi = (\varphi_1, \ldots, \varphi_n)$. Note that $\varphi_i$ are Lipschitz function germs. Suppose that $\ord (f) = r$ and $\ord (g) = s$.  By a linear change of coordinates, we may assume that 
$H_f (1,0, \ldots, 0) \neq 0$ and $H_g(1,0, \ldots, 0) \neq 0$ where $H_f$ and $H_g$ are the first homogeneous parts of $f$ and $g$ respectively. Restricting to the $x_1$-axis we have

$$ 1 \sim \frac{|f(x)\|}{\|g(\varphi(x))\|} \sim \frac{\|H_f(x)\|}{ \|H_g(\varphi(x))\|} \sim  \frac{\|x_1\|^r}{\|\varphi_1(x_1, 0, \ldots, 0)\|^s}  \gtrsim \frac{\|x_1\|^r}{\|x_1\|^s} .$$
This implies that $r \geq  s$. Exchanging $f$ with $g$ we obtain $r \leq s$. Therefore, $r = s$.
\end{proof}

\subsection{Pseudo-tangent maps associated to bi-Lipschitz homeomorphisms}
Given a germ of a bi-Lipschitz homeomorphism $\varphi: \bb K^n , 0 \to  \bb K^n, 0$. Suppose that $\psi$ is the inverse map germ of $\varphi$.  For $m \in \bb N$, define 
$$ \varphi_m (x) = m \varphi(\frac{x}{m}), \hspace{1cm} \psi_m (x) = m \psi(\frac{x}{m}).$$
It is obvious that $\varphi_m, \psi_m$ are bi-Lipschitz homeomorphisms of the same Lipschitz constants as $\varphi$ and $\psi$ respectively. By the Arzela-Ascoli theorem, there exists a subsequence $\{m_i\}$ such that $\varphi_{m_i}$ and $\psi_{m_i}$ uniformly converge to bi-Lipschitz maps $\varphi^*$ and $\psi^*$. E. Sampaio showed in \cite{Sampaio}, Theorem 3.2 that $\psi^*$ is the inverse of $\varphi^*$.  We call such  $\varphi^*$  a {\it pseudo-tangent map} associated to the bi-Lipschitz homeomorphism $\varphi$.

\section{Bi-Lipschitz contact invariance of rank}

Let $f, g: \bb K^n, 0 \to \bb K^p, 0$ be smooth map germs.
We have the following results.

\begin{thm} \label{thm_h_a} If $f$ and $g$ are bi-Lipschitz $\al A$-equivalent, so are $H_f$ and $H_g$. 
\end{thm}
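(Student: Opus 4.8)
The plan is to produce the bi-Lipschitz homeomorphisms conjugating $H_g$ to $H_f$ as \emph{pseudo-tangent maps} of the homeomorphisms conjugating $g$ to $f$. Write $f\circ\varphi=\psi\circ g$ with $\varphi,\psi$ germs of bi-Lipschitz homeomorphisms. Since bi-Lipschitz $\al A$-equivalence implies bi-Lipschitz $\al K$-equivalence (take the contact homeomorphism $(x,y)\mapsto(\varphi^{-1}(x),\psi^{-1}(y))$), Lemma~\ref{lem2.2} gives $\ord(f)=\ord(g)=:r$. The elementary fact I would record first is that, by Taylor's theorem, for a smooth germ $\xi$ with $\ord(\xi)\ge r$ one has $m^{r}\xi(x/m)\to\xi_{r}(x)$ uniformly on compact subsets of $\bb K^{n}$ as $m\to\infty$, where $\xi_{r}$ is the degree-$r$ homogeneous part of the Taylor expansion (which is $0$ when $\ord(\xi)>r$). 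Applying this to each component, the rescalings $f_{m}(x):=m^{r}f(x/m)$ and $g_{m}(x):=m^{r}g(x/m)$ converge uniformly on compacta to $H_{f}$ and $H_{g}$ respectively.

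Next I would rescale the conjugacy relation. Replacing $x$ by $x/m$ in $f(\varphi(x))=\psi(g(x))$ and multiplying by $m^{r}$ yields, with $\varphi_{m}(x)=m\varphi(x/m)$ as in the previous section and $\Psi_{m}(y):=m^{r}\psi(y/m^{r})$, the identity
$$ f_{m}\circ\varphi_{m}=\Psi_{m}\circ g_{m} $$
on balls around $0$ whose radii tend to infinity. Note that $\varphi$ is rescaled at rate $m$ while $\psi$ is effectively rescaled at rate $m^{r}$; nonetheless $\{\Psi_{m}\}$ is a subfamily of the rescalings $\{m'\psi(\cdot/m')\}_{m'}$, so the pseudo-tangent construction of Section~2 and Sampaio's theorem \cite{Sampaio} apply to it as well. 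By the Arzela-Ascoli theorem together with Sampaio's theorem (applied to $\varphi$, to $\psi$, and to their inverses), after passing to a common subsequence $\{m_{i}\}$ we may assume $\varphi_{m_{i}}\to\varphi^{*}$ and $\Psi_{m_{i}}\to\psi^{*}$ uniformly on compacta, where $\varphi^{*}$ and $\psi^{*}$ are bi-Lipschitz homeomorphisms of $\bb K^{n}$ and $\bb K^{p}$.

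Finally I would pass to the limit in the rescaled identity along $\{m_{i}\}$. Since the $\varphi_{m_{i}}$ all have the Lipschitz constant of $\varphi$ and fix the origin, $\varphi_{m_{i}}(x)$ stays in a fixed compact ball depending only on $x$; on that ball $f_{m_{i}}\to H_{f}$ uniformly and $H_{f}$ is continuous, hence $f_{m_{i}}(\varphi_{m_{i}}(x))\to H_{f}(\varphi^{*}(x))$. The same argument gives $\Psi_{m_{i}}(g_{m_{i}}(x))\to\psi^{*}(H_{g}(x))$. Therefore $H_{f}\circ\varphi^{*}=\psi^{*}\circ H_{g}$ on all of $\bb K^{n}$, which is exactly the assertion that $H_{f}$ and $H_{g}$ are bi-Lipschitz $\al A$-equivalent.

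The step I expect to require the most care is the bookkeeping around the two different rescaling rates ($m$ for $\varphi$ versus $m^{r}$ for $\psi$) and the need to run the Arzela-Ascoli extraction and Sampaio's theorem simultaneously for $\varphi$, $\psi$ and their inverses along a single subsequence, so that all the rescaled sequences converge to genuine bi-Lipschitz homeomorphisms rather than merely to injective Lipschitz maps; once this is in place, the interchange of limit and composition is routine because the uniform Lipschitz bounds confine every argument to a fixed compact set.
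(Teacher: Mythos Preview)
Your proof is correct and follows essentially the same strategy as the paper: rescale the source homeomorphism at rate $m$ and the target homeomorphism at rate $m^{r}$, extract a common subsequence via Arzel\`a--Ascoli (with Sampaio's theorem ensuring the limits are bi-Lipschitz), and then pass to the limit in the rescaled conjugacy. The only cosmetic difference is that you package the argument through the single identity $f_{m}\circ\varphi_{m}=\Psi_{m}\circ g_{m}$ and invoke uniform convergence on compacta, whereas the paper computes the two limits $m_{i}^{k}f(\varphi(x/m_{i}))\to H_{f}(\varphi^{*}(x))$ and $m_{i}^{k}\psi(g(x/m_{i}))\to\psi^{*}(H_{g}(x))$ separately by inserting the Taylor expansion and using the Lipschitz bound for $\psi$; the substance is identical.
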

\begin{proof} Since $f$ and $g$ are bi-Lipschitz $\al A$-equivalent there are germs of bi-Lipschitz homeomorphisms $\varphi : \bb R^n, 0 \to\bb R^n, 0$ and $\psi : \bb R^p, 0 \to \bb R^p, 0$  such that  $f\circ \varphi = \psi \circ g$. 

By Lemma \ref{lem2.2} we may assume $\ord (f) = \ord (g) = k$. We can write  $f$ and $g$ as
\begin{equation*}\label{eq_lem_cone}
f(x) = H_f(x) + O(\|x\|^{k+1}) \text{ and } g(x) = H_g(x) + O(\|x\|^{k+1})
\end{equation*}  where $k = \ord (f) = \ord (g)$.

By the Arzela-Ascoli theorem, there exists a sequence $\{m_i\}$ of positive integers such that as $i$ tends to $\infty$, the sequence of maps $\{m_i\varphi(\frac{x}{m_i})\}$ converges to a Lipschitz map germ $\varphi^*$ and the sequence of maps $m_i^k \psi (\frac{x}{m_i^k})$ converges to a Lipschitz map $\psi^*$. We also know that $\psi^*$ is the inverse of $\varphi^*$.   Since $H_f$ and $H_g$ are homogeneous polynomial maps of degree $k$, we have

(1) $m_i^k H_f (\varphi(\frac{x}{m})) = H_f(m_i\varphi(\frac{x}{m_i}) \to H_f (\varphi^*(x)) \text{ as } m_i \to \infty.$ 

(2) $m_i^k \psi (H_g (\frac{x}{m_i}) )= m_i^k \psi (\frac{H_g(x)}{m_i^k}) \to  \psi^* (H_g(x)) \text{ as } m_i \to \infty$.

We claim that (i) $m_i^k f(\varphi (\frac{x}{m_i}) )\to H_f (\varphi^*(x))$  and  (ii) $m_i^k \psi(g(\frac{x}{m_i})) \to  \psi^* (H_g(x))$  as $m_i \to \infty$. 
Let us give a proof for the claim.

 (i) 
\begin{align*}
m_i^k f(\varphi (\frac{x}{m_i})) & = m_i^k \left [H_f (\varphi(\frac{x}{m_i})) + O(\|\frac{x}{m_i}\|^{k+1})\right]\\
& =H_f(m_i \varphi(\frac{x}{m_i})) + m_i^k O(\|\frac{x}{m_i}\|^{k+1}) \to H_f (\varphi^*(x)). 
\end{align*} 

(ii)
\begin{align*}
\|m_i^k \psi(g(\frac{x}{m_i})) - m_i^k \psi((H_g(\frac{x}{m_i}))\| & = C m_i^k   \| g(\frac{x}{m_i})- H_g(\frac{x}{m_i}) \| \\
& =C m_i^k  O (\|\frac{ x}{m_i}\|^{k+1} )\to 0,
\end{align*} 
where where  $ C$  is the Lipschitz constant of $\psi$. 
This implies (ii).  

Since 
$$m^k \left [f (\varphi  (\frac{x}{m_i} ))\right ] =m^k \psi (g (\frac{x}{m_i})), $$ thus their limits

$$H_f(\varphi^*(x)) = \psi^* (H_g(x)).$$
The proof completes.

\end{proof}

\begin{thm}\label{thm_k} If $f$ and $g$ are bi-Lipschitz $\al K$-equivalent, then $rank (f) = rank (g)$. 

\end{thm}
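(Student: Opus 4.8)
The plan is to reduce the statement to one about the first homogeneous parts $H_f$ and $H_g$, in the spirit of the proof of Theorem \ref{thm_h_a}, and then to exploit the fact that bi-Lipschitz contact equivalence takes zero sets to zero sets. First, by Lemma \ref{lem2.2} I may put $k = \ord(f) = \ord(g)$. If $k \geq 2$, then all first-order partial derivatives of the $f_i$ and of the $g_i$ vanish at $0$, so the two Jacobian matrices at $0$ are zero and $\rank(f) = \rank(g) = 0$; nothing more is needed. So I may assume $k = 1$, in which case $H_f$ and $H_g$ are exactly the linear parts of $f$ and $g$, whence $\rank(f) = \rank(H_f)$ and $\rank(g) = \rank(H_g)$, the rank of a linear map. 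It therefore suffices to prove $\rank(H_f) = \rank(H_g)$.

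The heart of the proof is to run the pseudo-tangent construction on the contact equivalence itself. Since $f$ and $g$ are bi-Lipschitz contact equivalent, write the equivalence as $\theta(x, f(x)) = g(h(x))$ with $\theta(x,0) = 0$, where $H = (h, \theta)$ is a germ of bi-Lipschitz homeomorphism of $\bb K^n \times \bb K^p$. Since $k = 1$ the appropriate rescaling is isotropic: set $H_m(x,y) = m\, H(x/m, y/m) =: (h_m(x), \theta_m(x,y))$. Each $H_m$ is bi-Lipschitz with the Lipschitz constants of $H$ and $H^{-1}$, so by Arzela--Ascoli, together with \cite{Sampaio} to guarantee that the limit is again a bi-Lipschitz homeomorphism, a subsequence converges uniformly on compacta to some $H^* = (h^*, \theta^*)$, still of triangular form $(x,y) \mapsto (h^*(x), \theta^*(x,y))$ and with $\theta^*(x,0) = 0$. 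Because $\ord(f) = 1$ and $H_f$ is homogeneous of degree $1$, one has $m\, f(x/m) \to H_f(x)$ uniformly on compacta; because $h$ is Lipschitz and $H_g$ is linear, $m\, g(h(x/m)) = H_g(h_m(x)) + O(1/m) \to H_g(h^*(x))$. Rescaling the identity to $\theta_m\bigl(x,\, m\, f(x/m)\bigr) = m\, g(h(x/m))$ and passing to the limit along the subsequence gives
\[
\theta^*(x, H_f(x)) = H_g(h^*(x)),
\]
so $H_f$ and $H_g$ are bi-Lipschitz contact equivalent, realized by $H^*$.

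Finally, I would conclude from this that the zero sets of the linear maps $H_f$ and $H_g$ are bi-Lipschitz homeomorphic: from $\theta^*(x,0) = 0$ one gets $h^*(H_f^{-1}(0)) \subseteq H_g^{-1}(0)$, and the same argument applied to $(H^*)^{-1}$ (which is again triangular and fixes $\bb K^n \times \{0\}$) gives the reverse inclusion, so $h^*$ restricts to a bi-Lipschitz homeomorphism $H_f^{-1}(0) \to H_g^{-1}(0)$. But $H_f^{-1}(0)$ and $H_g^{-1}(0)$ are linear subspaces of $\bb K^n$ of dimensions $n - \rank(H_f)$ and $n - \rank(H_g)$; since they are homeomorphic they have the same dimension, so $\rank(H_f) = \rank(H_g)$, and therefore $\rank(f) = \rank(g)$.

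The step I expect to be the main obstacle is the limiting argument of the second paragraph: checking that the rescaled maps $H_m$ stay uniformly bi-Lipschitz, that the limit retains the triangular form with $\theta^*(\cdot,0) \equiv 0$, and that the relevant convergences ($m\, f(x/m) \to H_f$, $m\, g(h(x/m)) \to H_g \circ h^*$, and $\theta_m(x, m\, f(x/m)) \to \theta^*(x, H_f(x))$) are mutually compatible so that one may pass to the limit in the rescaled identity. It is worth stressing that the isotropic rescaling is what makes this work and that it works only for $k = 1$: for $k \geq 2$ the target variable would have to be scaled by $m^k$, which destroys the uniform Lipschitz bound in the source variable — but that case is disposed of directly and needs none of this.
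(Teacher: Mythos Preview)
Your proof is correct and follows essentially the same route as the paper: rescale isotropically, pass to a pseudo-tangent limit $H^*=(h^*,\theta^*)$ that retains the triangular form with $\theta^*(x,0)=0$, obtain a bi-Lipschitz $\al K$-equivalence of the linear parts, and conclude by comparing the dimensions of their (linear) zero sets. The only packaging difference is how order $1$ is arranged: you split off $\ord(f)\geq 2$ as trivially giving rank $0$ on both sides and then run the rescaling directly on the contact data when $\ord(f)=1$; the paper instead passes to $F(x)=(x,f(x))$ and $G(x)=(x,g(x))$, which always have order $1$ and satisfy $H\circ F=G\circ h$, so it can invoke Theorem~\ref{thm_h_a} for $F,G$ as bi-Lipschitz $\al A$-equivalent maps (and then read off from that proof that $H^*$ is triangular with $\theta^*(x,0)=0$). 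The embedding trick lets the paper cite Theorem~\ref{thm_h_a} rather than repeat the limiting argument, while your case split is self-contained and makes explicit why only $k=1$ requires work; otherwise the two arguments coincide.
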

\begin{proof} 
Assume that $\rank(f) = r$ and $\rank (g) = s$. By smooth changes of coordinates of the source and the target we can assume that $f(x) = (x_1, \ldots, x_r, \tilde{f}_{r+1}(x), \ldots, \tilde{f}_p(x))$ and $g(x) = (x_1, \ldots, x_s, \tilde{f}_{s+1}(x), \ldots, \tilde{f}_p(x))$ where the orders of  $\tilde{f}_i$ and $\tilde{g}_j$ are bigger than $1$.  

By the hypothesis, there are germs of bi-Lipschitz homeomorphisms   $h: \bb K^n \times \bb K^n$ and $H : \bb K^n \times \bb K^p, 0 \to  \bb K^n \times \bb R^p$ , $H(x, y) = (h(x), \theta(x,y))$  such that
 $H(x, f(x)) = (h(x), g(h(x)))$ and  $\theta(x, 0) = 0$.

Set $F(x) = (x, f(x))$ and $ G(x) = (x, g(x))$. It is obvious that $ H\circ F = G \circ h$. Therefore, $F$ and $G$ are bi-Lipschitz $\al A$-equivalent. By Theorem \ref{thm_h_a},  $H_F = (x, f') $ and $H_G = (x, g')$ are bi-Lipschitz $\al A$-equivalent where $f'(x) = (x_1, \ldots, x_r, 0, \ldots, 0)$ and $g'(x) = (x_1, \ldots, x_s, 0,\ldots,0)$. More precisely, it is shown in the proof of Theorem \ref{thm_h_a}  that there is a pseudo-tangent map $H^* = (h^*, \theta^*)$ associated to the map $H$ where $h^*$ is a pseudo-tangent map associated to $h$ such that $H^*\circ H_F  = H_G\circ h^*$.  Note that $\theta^*(x, y)  = \lim_{m_i \to \infty} m_i \theta(\frac{x}{m_i}, \frac{y} {m_i})$ where $\{m_i\}$ is some sequence of integers tending to infinity. Since $\theta(x, 0) = 0$,   then $\theta^*(x, 0) =0$. This implies that $f'$ and $g'$ are bi-Lipschitz $\al K$-equivalent. Hence, $\{f'= 0\}$ and  $\{g' = 0\}$ are bi-Lipschitz equivalent as sets.  Therefore, $r$ must be equal to $s$. 

\end{proof}

\section{Remarks on Boardman symbol of mappings}\label{section4}
In this section, we would like to give some remarks on the Boardman symbol of smooth map germs.  Let us  first briefly recall the definition of Boardman symbol following the book of Gibson \cite{Gibson}.
Let $\al E_n$ denote the set of smooth function germs  at $0 \in \bb K^n$.  Let $I$ be a finitely generated ideal in $\al E_n$  and $f_1, \ldots, f_p$ be the generators of $I$ and let  $ x_1, \ldots, x_n$ be a system of coordinates in $\al E_n$. For an integer $s \geq 1$, define by $\Delta_s I$ to be the ideal $I + I_s$ where $I_s$ is the ideal generated by $s \times s$-minors of the Jacobian matrix $(\partial f_i / \partial x_j)$. Note that the ideal $\Delta_s I$ does not depend on the choice of generators as well as the choice of coordinates. One then has the inclusion of ideals 
$$ I \subseteq \Delta_n I \subseteq  \Delta_{n-1} I \subseteq \ldots \subseteq\Delta_1 I .$$

Set $\Delta^s I = \Delta_{n - s + 1}$.  It turns out that
\begin{equation*} 
I \subseteq \Delta^1 I \subseteq  \Delta^{2} I \subseteq \ldots \subseteq\Delta^n I \hspace{1cm} (*)
\end{equation*}
The critical Jacobian extension of $I$ the largest proper ideal  $\Delta^{i_1} I$ in  $(*)$.  Continuing the process,  we obtain ascending sequence  $\Delta^{i_1} I, \Delta^{i_2} \Delta^{i_1} I, \ldots$.  The non-increasing sequence $(i_1, i_2, \ldots)$ is called the {\it Boardman symbol} of the ideal $I$.

Let $f : \bb K^n, 0 \to \bb K^p, 0$ be a smooth map germ. The Boardman symbol of $f$, denoted by $\al B(f)$,  is the Boardman symbol of the ideal generated by its components $f_1, \ldots , f_p$ .  
We can write

$$ \al B(f) = (\underbrace{a_1, \ldots, a_1}_{\alpha_1  \text{ times}}, \underbrace{a_2, \ldots, a_2}_{\alpha_2  \text{ times}}, \ldots)$$
where $a_i > a_{i+1} \geq 0$.  We call $a_i$ the {\it $i$-th Boardman symbol} of $f$ and $\alpha_i$ {\it the length} of $a_i$. It is deduced from the definition that 

(i) $a_1 = n - \rank (f)$, and 

(ii) $\alpha_1 = \ord(f)-1$.  

As proved in Theorem \ref{thm_k} that the rank and the order of a smooth map are bi-Lipschitz contact-invariants, so are the first Boardman symbol and its length.

It is known that the Boardman symbol  is a smooth contact invariant (see \cite{Gibson} for example). 

It is natural  to ask the following questions:

 (1) Is Boardman symbol a bi-Lipschitz  right (resp. left-right, contact) invariant?
 
 (2) Is Boardman symbol strong enough to determine the topological type of map germs, i.e., given two map germs of the same Boardman symbol, are they of the same topological type?
 
 Unfortunately, the answers to these questions are negative. Indeed, for the first question let us consider the family of function germs $f_t(x, y)  = x^4 + tx^2y^6 +y^9$. Using Theorem 7.9, \cite{nrs}, it is easy to check that this family is bi-Lipschitz right trivial. Calculation gives  that $\al B(f_0) = (2, 2, 2, 1,1,1,1,1, 0,\ldots)$ and $\al B(f_t) = (2, 2, 2, 1, 1,1,1,0,\ldots)$ for $t\neq 0$.  
 
 For the second question, consider the following complex analytic function germs: $f(x, y) = x^4 + y^5$ and $g(x, y) = x^4 - 2x^2y^3  - 4xy^5 + y^6 + y^7$.  We have  $\al B(f) = \al B(g) = (2, 2, 2, 1, 0,\ldots)$.  The Puiseux pairs of $f$ and $g$ are  $\{(5,4)\}$ and $ \{ (3, 2); (7, 2)\}$ respectively. Since $f$ and $g$ are reduced, their zero sets are not topological equivalent (see \cite{Zariski}, \cite{Parusinski}). 

\subsection*{Acknowledgments} The research was supported by the ERCEA 615655 NMST Consolidator Grant and also by the Basque Government through the BERC 2018--2021 program and by the Spanish Ministry of Science, Innovation and Universities: BCAM Severo Ochoa accreditation SEV-2017--0718.

\end{document}